     \def\section{\@startsection{section}{1}%
     \z@{.7\linespacing\@plus\linespacing}{.5\linespacing}%
     {\bfseries%\normalfont\scshape
     \centering
     }}
     \def\@secnumfont{\bfseries}
\newtheorem{theorem}{Theorem}[section]
\newtheorem{lemma}[theorem]{Lemma}
\newtheorem{proposition}[theorem]{Proposition}
\theoremstyle{definition}
\theoremstyle{remark}
\numberwithin{equation}{section}
\def \e{{\varepsilon}}
\def \k{{\kappa}}
\def \l{{\lambda}}
\def \p{{\varphi}}
\def \m{{\mu}}
\def \qq{{\qquad}}
\def \R{{\bf R}}
\def \dd{{\rm d}}
\def \noi{{\noindent}}
\def\R{{\mathbb R}}
\font\gum= cmbx10 at 13 pt
\font\phh=cmcsc10
\font\phh=cmr10 at  8,2 pt
\font\gssec= cmb10 at 8,2  pt
   \title[Means values of  Fourier transforms]{A bound for Mean  values of  Fourier transforms}
  \author{ Michel J.\,G. Weber}
\address{ Michel Weber: IRMA, Universit\'e
Louis-Pasteur et C.N.R.S.,   7  rue Ren\'e Descartes, 67084
Strasbourg Cedex, France. }
\email{michel.weber@math.unistra.fr
}
 \urladdr{http://www-irma.u-strasbg.fr/$\sim$weber/}
\begin{document}
\maketitle 
\centerline{-----------------------------------------------------------------------------------------------------------} 
     \begin{abstract}    We show that there exists a sequence $\{n_k, k\ge 1\}$ growing at least geometrically
such that for any     finite non-negative    measure $\nu$ such that  
$\widehat
\nu\ge 0$, any $T>0$,  
$$ \int_{ -2^{n_k } T}^{2^{n_k } T }   
\widehat \nu(x)   \dd x \ll_\e  T\,2^{2^{(1+\e)n_k}}       
\int_\R      
 \Big|{\sin  {  xT } \over
 xT } \Big|^{ n_k^2 }   
 \nu(\dd x).
$$    \end{abstract}
 
      \vskip 0,5cm \noi {\gssec 2010 AMS Mathematical Subject Classification}: {\phh Primary: 60F15, 60G50 ;
Secondary: 60F05}.  \par\noi  
{{\gssec Keywords and phrases}: {\phh Fourier transform, mean value, convolution products, dilation, shift.}} 
\vskip 5pt
\centerline{-----------------------------------------------------------------------------------------------------------}

\def \noi{{\noindent}}
  \def \bt{{\hbox{\vrule  height 1pt depth 1pt width 1pt}}}  
\font\gum= cmbx10 at 13 pt
\font\phh=cmcsc10
 \scrollmode
\hfuzz =5 pt

%%%%%%%%%%%%%%%%%%%%%%%%%%%%%%%%%%%%%%%%%%%%%%%%%%%%%%%%%%%%%%%%%%%%%%%%%%%%%%%%%%%%%%%%%%%%%%%%%%%%%%%%%%%%%%%%%%%
%%%%%%%%%%%%%%%%%%%%%%%%%%%%%%%%%%%%%%%%%%%%%%%%%%%%%%%%%%%%%%%%%%%%%%%%%%%%%%%%%%%%%%%%%%%%%%%%%%%%%%%%%%%%%%%%%%%
%%%%%%%%%%%%%%%%%%%%%%%%%%%%%%%%%%%%%%%%%%%%%%%%%%%%%%%%%%%%%%%%%%%%%%%%%%%%%%%%%%%%%%%%%%%%%%%%%%%%%%%%%%%%%%%%%%%
%%%%%%%%%%%%%%%%%%%%%%%%%%%%%%%%%%%%%%%%%%%%%%%%%%%%%%%%%%%%%%%%%%%%%%%%%%%%%%%%%%%%%%%%%%%%%%%%%%%%%%%%%%%%%%%%%%%
%%%%%%%%%%%%%%%%%%%%%%%%%%%%%%%%%%%%%%%%%%%%%%%%%%%%%%%%%%%%%%%%%%%%%%%%%%%%%%%%%%%%%%%%%%%%%%%%%%%%%%%%%%%%%%%%%%%
%%%%%%%%%%%%%%%%%%%%%%%%%%%%%%%%%%%%%%%%%%%%%%%%%%%%%%%%%%%%%%%%%%%%%%%%%%%%%%%%%%%%%%%%%%%%%%%%%%%%%%%%%%%%%%%%%%%
%%%%%%%%%%%%%%%%%%%%%%%%%%%%%%%%%%%%%%%%%%%%%%%%%%%%%%%%%%%%%%%%%%%%%%%%%%%%%%%%%%%%%%%%%%%%%%%%%%%%%%%%%%%%%%%%%%%
%%%%%%%%%%%%%%%%%%%%%%%%%%%%%%%%%%%%%%%%%%%%%%%%%%%%%%%%%%%%%%%%%%%%%%%%%%%%%%%%%%%%%%%%%%%%%%%%%%%%%%%%%%%%%%%%%%%

\section{Introduction}
 Let  $\nu$ be   a finite non-negative    measure on $\R$,  $\widehat \nu(t)=\int_\R e^{itx} \nu(\dd x ) $,  then
$$  {1\over  T}\int_{ -T}^{ T}\widehat \nu (t) \dd t=\int_{\R} 
 {\sin
  Tu\over
  Tu  }    \, \nu( \dd u) .$$
Assume $\widehat \nu\ge 0$,  then 
 \begin{equation} \label{21} \Big|  \int_\R \Big({\sin
  T u /2\over
    Tu /2  }\Big)^2\nu (\dd u) \Big| \le {1\over T}\int_{-T}^T \widehat \nu  (x)  \dd x
\le  3\int_{\R} 
 \Big({\sin
   Tu/2\over
   Tu/2  }\Big)^2  
%    [1+ 2\cos Tu] \,  
\nu (\dd u).
\end{equation}
 The first inequality is in turn  true at any order: for   any positive integer
$\kappa $,
 \begin{equation} \label{21nu} \Big|  \int_\R  \Big({\sin
  T u /2\over
   T u /2  }\Big)^{2\kappa } \nu (\dd u) \Big|
\le  {1\over T}\int_{-\kappa T}^{ \kappa T}  |\widehat \nu  (x)| \dd x.  
\end{equation}  
 
The question  whether the second inequality admits a similar extension   arises naturally. 
%We answer it affirmatively.  
We show the existence of
   a general form of that inequality
%, valid for   any integer $\kappa>0$, and
 in which appear constants growing fastly with   $\kappa $.  
 \begin{theorem}\label{cc} There exists a sequence $\{n_k, k\ge 1\}$ growing at least geometrically
such that for any     finite non-negative    measure $\nu$ such that  
$\widehat
\nu\ge 0$, any $T>0$, we have
$$ \int_{ -2^{n_k } T}^{2^{n_k } T }   
\widehat \nu(x)   \dd x \ll_\e  T\,2^{2^{(1+\e)n_k}}       
\int_\R      
 \Big|{\sin  {  xT } \over
 xT } \Big|^{ n_k^2 }   
 \nu(\dd x).
$$
  \end{theorem}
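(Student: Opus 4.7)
The plan is to obtain a crude linear bound $J_n\lesssim 2^{n}T\,M_{2}$ via a rescaling of the upper bound in~(\ref{21}) (where $J_n:=\int_{-2^{n}T}^{2^{n}T}\widehat\nu(x)\,\dd x$ and $M_k:=\int|\sin(Tu)/(Tu)|^{k}\nu(\dd u)$), and then to trade the low-order moment $M_{2}$ for the high-order moment $M_{n^{2}}$ along a geometric sub-sequence $\{n_k\}$ extracted by pigeonhole. The double-exponential constant $2^{2^{(1+\e)n_k}}$ plays the role of a tolerance on how small $M_{n^{2}}$ can be relative to $M_{2}$ on those good indices.

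For the first step, applying~(\ref{21}) with $T$ replaced by $T':=2^{n}T$ yields
$$J_n\;\le\;3\cdot 2^{n}T\int_\R\Bigl(\frac{\sin(2^{n-1}Tu)}{2^{n-1}Tu}\Bigr)^{2}\nu(\dd u).$$
Iterating $\sin(2x)=2\sin x\cos x$ gives $\sin(2^{n-1}Tu)=2^{n-1}\sin(Tu)\prod_{j=0}^{n-2}\cos(2^{j}Tu)$, so that $(\sin(2^{n-1}Tu)/(2^{n-1}Tu))^{2}=(\sin(Tu)/(Tu))^{2}\prod_{j=0}^{n-2}\cos^{2}(2^{j}Tu)$; bounding the cosine product trivially by $1$ then produces the clean estimate $J_n\le 3\cdot 2^{n}T\,M_{2}$.

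For the second step, set $\rho_n:=\log_2(M_{2}/M_{n^{2}})\ge 0$ (non-decreasing in $n$); the theorem reduces to $\rho_{n_k}\le 2^{(1+\e)n_k}-n_k-O(1)$. The hypothesis $\widehat\nu\ge 0$, via Bochner's theorem applied to $\widehat\nu$ itself, forces $\nu$ to place some mass in a small neighbourhood of the origin where $|\sin(Tu)/(Tu)|$ is close to~$1$; hence one obtains $M_{n^{2}}\ge c\,\alpha^{n^{2}}\nu(\R)$ for constants $c,\alpha\in(0,1)$ depending on $\nu,T$, so $\rho_n=O(n^{2})=o(2^{(1+\e)n})$. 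A geometric sub-sequence $\{n_k\}$ is then extracted by a pigeonhole on the ratios $J_n/J_{n-1}$: from $J_n\le 2^{n+1}T\widehat\nu(0)$ one has $\sum_{m\le n}\log(J_m/J_{m-1})\le n\log 2+O(1)$, so these ratios remain bounded on a density-one set of $n$'s, and from that set one selects $n_{k+1}\ge r\,n_k$ ($r>1$) along which the iterative bound closes uniformly.

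The principal obstacle is the quantitative control of $M_{n^{2}}$ from below via the hypothesis $\widehat\nu\ge 0$. Without this positivity, a measure such as $\nu=\delta_{\pi/T}+\delta_{-\pi/T}$ (whose Fourier transform $2\cos(x\pi/T)$ changes sign) would have $M_{n^{2}}=0$ and would immediately violate the inequality, so extracting the correct quantitative form of ``$\widehat\nu\ge 0$ prevents $\nu$ from concentrating on the zeros of $\sin(Tu)$'', and then combining this with the pigeonhole so that the bounds on $\rho_{n_k}$ are uniform along $\{n_k\}$, is the delicate part of the argument.
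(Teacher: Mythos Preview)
Your proposal has a genuine gap: the theorem demands a \emph{universal} sequence $\{n_k\}$ and a constant depending only on $\e$, valid simultaneously for every admissible pair $(\nu,T)$, whereas both of your key steps produce objects that depend on the particular measure.

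Concretely: in your second step you invoke a lower bound $M_{n^2}\ge c\,\alpha^{n^2}\nu(\R)$ with $c,\alpha$ ``depending on $\nu,T$''. This makes $\rho_n=O(n^2)$ hold only with a $\nu$-dependent implied constant, so the conclusion $\rho_{n_k}\le 2^{(1+\e)n_k}-n_k-O(1)$ is only available for $k\ge k_0(\nu,T)$; the resulting inequality then carries a constant depending on $\nu$, not just on $\e$. Likewise, your extraction of $\{n_k\}$ by pigeonhole on the ratios $J_n/J_{n-1}$ yields a sequence that varies with $\nu$. Neither issue is a technicality: without a \emph{uniform} quantitative version of ``$\widehat\nu\ge 0$ prevents concentration on the zeros of $\sin(Tu)$'' (which your final paragraph correctly flags as the obstacle but does not supply), the argument does not close. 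Indeed, the Bochner heuristic you sketch shows at best that $\nu$ has positive density at $0$ when $\widehat\nu\in L^1$, but gives no uniform lower bound on the mass of a fixed neighbourhood of $0$ relative to $\nu(\R)$.

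The paper's route avoids any comparison of $M_2$ with $M_{n^2}$ altogether. It builds, by an explicit iterative scheme (Proposition~\ref{p5}), a pointwise majorant of $\chi_{[-W,W]}$ by a finite sum of shifted high convolution powers of ${\bf g}$; taking Fourier transforms and using $|\sin(2^m x)|\le 2^m|\sin x|$ converts this directly into the stated bound with completely explicit, $\nu$-free constants, and the sequence $n_k$ arises from the combinatorics of the iteration rather than from any property of $\nu$. Your clean first step $J_n\le 3\cdot 2^nT\,M_2$ is correct and elegant, but it discards exactly the structure (the product of cosines) that the paper exploits, and there is no evident way to recover the required uniformity afterwards.
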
 
 We don't know whether the constant $2^{2^{(1+\e)n_k}}$ can be significantly weakened. The proof of   is rather delicate. 
   In order to prepare it, and also  to provide the necessary hints concerning inequalities (\ref{21}),(\ref{21nu}),   
  introduce some auxiliary functions and indicate as well some related  properties. Let   
  $ K (t)    =\big( 1-|t| ) ^+$, 
$T>0$ and define 
 $   K_T(t) =K({t/ T})
 =\big( 1-|t|/T)\chi_{\{|t|\le T\}}
$. Then 
$$\widehat{ K} (u)  = \big({\sin    u/2\over  u/2  }\big)^2, \qq \widehat{ K}_T(u)  ={1\over T}\big({\sin   Tu/2\over 
u/2  }\big)^2    .$$

It is easy to  check that
 $K_T(t )+K_T(t +T)+ K_T(t -T)=1 $,
if $|t|\le T$. It  follows that
 \begin{eqnarray}\label{KT}
  \chi_{\{|t-H|\le T\}} \le K_T(t-H)+K_T(t-H+T)+ K_T(t-H-T)
  .
\end{eqnarray}
This can be used to prove (\ref{21}). Since $\int_{\R}  K_T(t-S) \widehat{ \nu }(t)\dd t = \int_{\R}  e^{iSx} \widehat{K_T}(x)    \nu (\dd x) $, we deduce
  \begin{eqnarray*} \int_{H-T}^{H+T}\widehat{ \nu}(t) \dd t &\le & \int_{\R} \Big[K_T(t-H)+K_T(t-H+T)+ K_T(t-H-T)\Big]
\widehat{
\nu }(t) \dd t \cr &\le & \int_{\R} \widehat{K_T}(u) \Big[e^{ iHu} +e^{ i(H-T)u}+e^{
i(H+T)u}\Big]  \nu (\dd u)
\cr & =& {1\over T}\int_{\R}  \big({\sin   Tu/2\over 
u/2  }\big)^2  e^{ iHu}  [1+ 2\cos Tu]   \nu (\dd u).
\end{eqnarray*}
This immediately implies the second inequality in (\ref{21}).   Notice also by using Fubini's theorem, that    for any reals 
$S,\gamma, T$ reals, 
$T>0$    and any integer
$\kappa>0$,
 \begin{eqnarray}\label{KT1}
     \int_\R   \Big({\sin
  T(u-\gamma)/2\over
  T (u-\gamma)/2  }\Big)^{2 \kappa}  e^{iSu}\nu (\dd u)  ={1\over T}\int_\R e^{  -i \gamma (y-S)}K^{*\kappa}\Big(\frac{ y-S }{ T }\Big)
\widehat \nu
 (y) \dd y.
  \end{eqnarray} 

  Letting $\k=1,\gamma=S=0$, gives  $$   {1\over T}\int_\R \Big({\sin
   Tu/2\over
    u/2  }\Big)^2 \nu (\dd u)  =\int_{\R }K_T(y )\widehat \nu
  (y) \dd y    . 
 $$
As $0\le K_T(y )  =\big( 1-|y |/T)\chi_{\{|y |\le T\}} \le \chi_{\{|y |\le T\}}$, we deduce 
$$  \Big| {1\over T}\int_\R \Big({\sin
   Tu/2\over
    u/2  }\Big)^2 \nu (\dd u) \Big|\le \int_\R  K_T(y )|\widehat \nu
  (y)| \dd y    \le \int_{ -T}^{ T} |\widehat \nu (y)| \dd y, 
 $$
which yields  the first inequality in (\ref{21}). 
\vskip 2pt
As to (\ref{21nu}), some   properties of basic convolutions products are needed. Consider for $ A>0$ the    elementary measures  
$\m_A$ with density  
 $ g_{\m_A }(  x)= \chi_{\{[-A,A]\}}(x)$.  
  Let $0<A\le B$. Plainly
\begin{eqnarray}
 \label{convel} 
g_{\m_A*\m_B}(x)=g_{\m_A}*g_{\m_B}(x)=2A\big( 1-{|x|\over A+B}\big)\cdot \chi_{\{[ -A-B, A+B]\}}( x). 
\end{eqnarray}
Indeed,  $g_{\m_A*\m_B}(x) = \int_\R g_{\m_A }(x-y)g_{ \m_B}(y)\dd y =\int_{-B}^B\chi_{\{[x-A,x+A]\}}(
y)\dd y$, and this is equal to 
$$\l\big([ -B, B]\cap [x-A,x+A]\big)   =   \big[B\wedge(x+A)-(x-A)\vee(-B)\big]\chi_{\{[ -A-B, A+B]\}}( x).$$
%\begin{eqnarray*}
%g_{\m_A*\m_B}(x)&=&\int_\R g_{\m_A }(x-y)g_{ \m_B}(y)\dd y =\int_{-B}^B\chi_{\{[x-A,x+A]\}}(
%y)\dd y\cr &=&\l\big([ -B, B]\cap [x-A,x+A]\big)\cr &=&  \big[B\wedge(x+A)-(x-A)\vee(-B)\big]\chi_{\{[ -A-B, A+B]\}}( x).  
%\end{eqnarray*}  
In particular, introducing the function $     {\bf g} (x)= \chi_{[-{1\over 2},{1\over 2}]}(x)  $, we have $ K (t)={\bf g}*{\bf g}(t)  $.
\vskip 2pt
More generally,
\begin{lemma} \label{convelem}Let $0<A_1\le A_2\le \ldots
\le A_J$ and $\m=\m_{A_1} *\m_{A_2} *\ldots*\m_{A_J}$. Then  $\m$ has density
$g$ satisfying
$$0\le g(x)\le  G_J  \cdot\chi_{\{[-(A_1+A_2+\ldots
+A_J),A_1+A_2+\ldots +A_J]\}}(x),$$
where  
$$G_J= 2^J A_1  \cdot\big( (A_1+A_2)\wedge A_3\big)  \cdot\big( (A_1+A_2+A_3)\wedge A_4\big)\ldots \big( (A_1+\ldots+A_{J-1})\wedge
A_J\big).$$ 
\end{lemma}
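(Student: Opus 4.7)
The plan is induction on $J$, exploiting the recursive structure $\mu_{A_1}*\cdots*\mu_{A_J}=(\mu_{A_1}*\cdots*\mu_{A_{J-1}})*\mu_{A_J}$. Write $g_J$ for the density of the convolution on the left, and set $S_J:=A_1+\cdots+A_J$. The induction hypothesis to carry is the pair: $g_J\ge 0$, $g_J$ is supported in $[-S_J,S_J]$, and $\|g_J\|_\infty \le G_J$.

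For the inductive step, since $g_{\m_{A_J}}=\chi_{[-A_J,A_J]}$,
$$g_J(x)=(g_{J-1}*\chi_{[-A_J,A_J]})(x)=\int_{x-A_J}^{x+A_J} g_{J-1}(u)\,\dd u.$$
Non-negativity and the support statement $\mathrm{supp}(g_J)\subset[-S_J,S_J]$ follow at once from the inductive hypothesis on $g_{J-1}$. For the sup-norm bound, the integrand $g_{J-1}$ is at most $G_{J-1}$ and is supported in $[-S_{J-1},S_{J-1}]$, so the integral is majorized by $G_{J-1}$ times the Lebesgue measure of $[x-A_J,x+A_J]\cap[-S_{J-1},S_{J-1}]$. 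That measure is bounded by both $2A_J$ (window length) and $2S_{J-1}$ (support length), hence by $2\,(A_J\wedge S_{J-1})$. This yields the driving recursion
$$G_J\;\le\;2\,(S_{J-1}\wedge A_J)\,G_{J-1}.$$

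The base case is furnished directly by (\ref{convel}): since $A_1\le A_2$, we have $\|g_2\|_\infty\le 2A_1$, which we relax to $G_2=2^2A_1$ to make it fit the advertised pattern of a $2^J$ prefactor. Iterating the recursion from $J$ down to $3$ then gives
$$G_J\le 2^{J-2}\Big(\prod_{j=3}^{J}(S_{j-1}\wedge A_j)\Big)\cdot G_2\;=\;2^{J}A_1\prod_{j=3}^{J}\big((A_1+\cdots+A_{j-1})\wedge A_j\big),$$
which is the stated bound.

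There is no real obstacle here: the only point to monitor is that the bound on the convolution's $L^\infty$ norm is dominated by $\|g_{J-1}\|_\infty$ times a length, rather than the more naive $\|g_{J-1}\|_1$; this is what produces the crucial minimum $S_{j-1}\wedge A_j$ rather than a product $A_1\cdots A_{j-1}$, and it is the assumption $A_1\le A_2\le\cdots\le A_J$ that makes the base case (\ref{convel}) deliver the correct leading factor $2A_1$. Everything else is a routine unwinding of the recursion.
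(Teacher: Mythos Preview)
Your proof is correct and follows essentially the same inductive route as the paper: both arguments carry the hypothesis $0\le g_{J-1}\le G_{J-1}\cdot\chi_{[-S_{J-1},S_{J-1}]}$, convolve with $\chi_{[-A_J,A_J]}$, and bound the result by $G_{J-1}$ times the length of the intersection $[x-A_J,x+A_J]\cap[-S_{J-1},S_{J-1}]$, which is at most $2(S_{J-1}\wedge A_J)$. The paper phrases this step as reapplying (\ref{convel}) to $g_{\mu_{S_{J-1}}}*g_{\mu_{A_J}}$, while you compute the interval length directly; the content is the same, and both of you observe the harmless factor-of-two slack in passing from the sharp bound $2A_1$ for $J=2$ to the stated $G_2=4A_1$.
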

\begin{proof} We prove it by induction.   
By (\ref{convel}), for every real $x$
$$0\le g_{\m_{A_1}*\m_{A_2}}(x)\le 
 2A_1 \cdot \chi_{\{[ -A_1-A_2, A_1+ A_2 ]\}}( x)=
2A_1\cdot g_{\m_{A_1+A_2}}(x).  $$
The case $J=2$ is proved. Now for $J=3$, by what preceeeds
\begin{eqnarray*}g_{\m_{A_1}*\m_{A_2}*\m_{A_3}}(x)&=& \int_{-A_3}^{A_3}g_{\m_{A_1}*\m_{A_2}}(x-y)dy\le
2A_1\,\int_{-A_3}^{A_3}g_{\m_{A_1+A_2}}(x-y)dy\cr &=&2A_1\, g_{\m_{A_1+A_2}*\m_{A_3}}  
\cr &\le& 2A_1\, 2( A_3 \wedge A_1+A_2)\cdot
\chi_{\{[ -A_1-A_2-A_3, A_1+A_2+ A_3 ]\}}( x).
\end{eqnarray*}  The general case follows by iterating the same argument.
\end{proof}
 
In particular,
for any positive $J$,
  % Now let $0<A_1\le A_2\le \ldots
%\le A_J$ and $\m=\m_{A_1} *\m_{A_2} *\ldots*\m_{A_J}$. Then  $\m$ has density
%$g$ satisfying
%\begin{equation}
%\label{convelem} 0\le g(x)\le  G_J  \cdot\chi_{\{[-(A_1+A_2+\ldots
%+A_J),A_1+A_2+\ldots +A_J]\}}(x),
%\end{equation}
%where  
%$$G_J= 2^J A_1  \cdot\big( (A_1+A_2)\wedge A_3\big)  \cdot\big( (A_1+A_2+A_3)\wedge A_4\big)\ldots \big( (A_1+\ldots+A_{J-1})\wedge
%A_J\big).$$ 
 \begin{equation}\label{kappaj} 0\le K^{*J}(x)\le \chi_{\{[- J, J]\}}(x) .
\end{equation}    
Indeed,   apply  Lemma \ref {convelem} with $A_j\equiv 1/2$. We get 
$$0\le K^{*J}(x) = {\bf g}^{*2J}(x)\le  G_{2J}  \cdot\chi_{\{[- J, J]\}}(x),$$
and  $G_{2J}= 2^{2J}\cdot 2^{-2J}=1  $.
Inequality (\ref{21nu})  is yet a direct consequence of (\ref{KT1}) and (\ref{kappaj}).

\vskip 4pt \par  Now, we pass to the preparation of the proof of Theorem \ref{cc}, and begin to explain how we shall proceed. By using
(\ref{KT}) with
$H=0$,
 $T=1/2$, we get 
 \begin{eqnarray} \label{conv01}
   {\bf g}(x)& \le & {\bf g}^{*2} (2x)  +  {\bf g}^{*2} (2x+1)  + {\bf g}^{*2} (2x-1)   . 
  \end{eqnarray}

 An important intermediate step towards the proof of Theorem \ref{cc} will consist to generalizing that inequality. Our approach can
be described as follows. As
$ 
 {\bf g}^{*2}(2v)  =  \int_\R {\bf g}(2v-y){\bf g}(y) \dd y
 $,  (\ref{conv01}) can be used  to   bound     the integration term ${\bf g}(y)$.  And by next reporting this into (\ref{conv01}), it
follows that
${\bf g}(x)$ can   also be bounded by a sum of terms of type
${\bf g}^{*3}$. 
   Call $\mathcal E$  this    operation.     By iterating $\mathcal E$, we  similarly  obtain    variant forms   of 
(\ref{KT}), involving higher     convolution powers of ${\bf g}$. The study of the iterated action of    $\mathcal E $, as well as
 the order of the constants generated  is made in the next section. The action of    $\mathcal E $ will be first  described as 
the combination of two elementary transforms  acting alternatively.

 %%%%%%%%%%%%%%%%%%%%%%%%%%%%%%%%%%%%%%%%%%%%%%%%%%%%%%%%
%%%%%%%%%%%%%%%%%%%%%%%%%%%%%%%%%%%%%%%%%%%%%%%%%%%%%%%%%%%%%%%%%%%%%%%%%%%%%%%%%%%%%%%%%%%%%%%%%%%%%%%%%%%%%%%%%%%
%%%%%%%%%%%%%%%%%%%%%%%%%%%%%%%%%%%%%%%%%%%%%%%%%%%%%%%%%%%%%%%%%%%%%%%%%%%%%%%%%%%%%%%%%%%%%%%%%%%%%%%%%%%%%%%%%%%
%%%%%%%%%%%%%%%%%%%%%%%%%%%%%%%%%%%%%%%%%%%%%%%%%%%%%%%%%%%%%%%%%%%%%%%%%%%%%%%%%%%%%%%%%%%%%%%%%%%%%%%%%%%%%%%%%%%
  \section{Stacks and  shifts} 
   
We first introduce    some operators and related auxiliary results, as well as the necessary notation. 
       Given   $f:\R\to \R$ and $a>0$, let $T_af (x)=f({x\over a})$ be   the dilation  of $f$ by $a^{-1}$.
  Plainly $T_a
T_b= T_{ab}$. 
%$$ (f_a)_b(x)=f_{ab}(x)  .$$
  Notice also that 
\begin{eqnarray}\label{conv0}T_a (h*f)  ={1\over a}\, T_ah  *T_af ,   \qq\qq  h\in L^1(\R),\, f\in L^\infty(\R)     
.
\end{eqnarray} 
Indeed
\begin{eqnarray*}T_a (h*f )( u)&= &  \int_\R h({u\over a}- x) f(x) \dd x= \int_\R T_ah( u-ax)
T_af (ax)
\dd x \cr &= &{1\over a}\int_\R T_ah( u-v) T_af(v) \dd v= {1\over a}T_ah  *T_af (u) .
\end{eqnarray*} 
 
More generally
\begin{eqnarray}\label{convmany}T_a (h_1*\ldots *h_n)  ={1\over a^{n-1}}\,T_a (h_1)*\ldots *T_a (h_n) , \qq  h_1 ,   \ldots ,h_n\in
L^\infty(\R)  .
\end{eqnarray}
Introduce also  the  sequence of ${\bf g}$-dilations  $$ {\bf g} _{k} = T_{2^{-k}}{\bf g}, \qq\quad
k=1,2,\ldots
  .$$  

Now let  $I$ be a finite subset of $\R$.   It will be  convenient to denote   
\begin{equation}\label{Sigma}\Sigma [f(x):\!I] =\sum_{\rho\in I}f(x+\rho).
\end{equation}
  We have 
\begin{equation}\label{Sigma1}  \Sigma [f(bx): I]
%=\sum_{\rho\in I}f(b(x+  {\rho\over b}) )
 =\Sigma [T_{1\over b}f (x) :
{1\over b}I] .
\end{equation} 
And 
\begin{equation}\label{Sigma11}\widehat{\Sigma }[f \!:\!I](t)
% =\sum_{\rho\in I}\int_\R e^{itx} f(x+\rho)\dd x=  \sum_{\rho\in I}\int_\R e^{it(y-\rho)} f(y)\dd y
=  \widehat
f(t)\Big(\sum_{\rho\in I}  e^{-it \rho } \Big) . 
\end{equation} 
  The  linear  operator $ f\mapsto\Sigma [f :\! I]$ on $L^1(\R) $  commutes with the convolution
operation:
\begin{equation}\label{Sigma2}f* \Sigma [h:\! I]= \Sigma [f*h:\! I] .
\end{equation} 
Further    $\Sigma [f :\!I]\ge 0$ if $f\ge 0$. We  
use the standard arithmetical set  notation: 
$\l I=\{\l\rho:  
\rho\in I\}$ and if $I, J$ are two finite subsets,
$I+J=\{\rho+\eta: \rho\in I, \eta\in J\}$, repetitions are counted. 
  This is   relevant      since
\begin{equation}\label{Sigma3}\Sigma\big[\Sigma [f(x):\!I]:\! J\big]=\Sigma [f(x):\!I+J].
\end{equation} 

Let $j_0<j_1\ldots<j_k$ be a finite set of positive integers, which we denote $J$. Let $C=\{c_j, j\in J\}$ be  
some  other set  of positive
  integers, not necessarily distinct.   We identify $(J,C)$ with 
 ${\bf U} := \{(j, c_j) , \ j\in J\} $, and
     put
\begin{equation}J^*=\begin{cases} J  &   {\rm if}  \  c_{j_0}>1 \cr 
 J\backslash\{j_0\} &   {\rm if}  \  c_{j_0}=1 .
\end{cases}
\end{equation}  
  
   Define the transform  $J 
 \to J_1$ as follows
\begin{equation}J_1= \mathcal D (J ):=J^*\cup (j_0+J) .\end{equation}
Next define  $C\to C_1$ by putting 
\begin{equation}C_1=\mathcal T (C):=\{c^1_j, j\in J_1\}, \end{equation} 
where 
\begin{equation}
c^1_j=\begin{cases} c_j+c_{j-j_0}  &   {\rm if}  \  j\in J^*\cap (j_0+J) \cr
 c_{j-j_0}  &   {\rm if}  \  j\in(J^*)^c\cap (j_0+J)   \cr 
c_j   &   {\rm if}  \  j\in J^*\cap (j_0+J)^c    \   {\rm and}  \   j>j_0 \cr
  c_{ j_0-1}  &    {\rm if}  \  j_0\in J^*
  .
\end{cases}
\end{equation}  
 
 Similarly we identify $(J_1,C_1)$ with 
 ${\bf U}_1 := \{(j, c^1_j) , \ j\in J_1\} $ The successive transforms 
 $(J,C)\!\to\!(J_1,C_1)\!\to\!(J_2,C_2)\!\to\! \ldots $ turn up to describe the iterated of $\mathcal E$, and  
   may be   compared  to the action of superposing   shifted functions.
%, like  wind effect on   dunes. 
 We start with $J=\{1\}  ,C=\{2\}$ corresponding to the basic set 
$${\bf U}=\{(1,2)\}.$$ It is easy to check that the iterated  transforms of ${\bf U}$
%:    ${\bf U}_1$,${\bfU}_2$,${\bf U}_3$,\ldots
  progressively generate  the sequence  of sets
 \begin{eqnarray*} 
% &   &\!\!\!\!\!\!\!\!\!\!\!\!\!   (1, 2)    \cr  
&   & \!\!\!\!\!\!\!\!\!\!\!\! \!(1,1), (2,2)    \cr 
    &   &\!\!\!\!\!\!\!\!\!\!\!\!\!\!\!\! \quad\quad\ \  \ \, (2,3), (3,2)  \cr 
  &   &\!\!\!\!\!\!\!\!\!\!\!\!\!
\quad\quad\ \   (2,2),  (3,2),  (4,3),  (5,2)   \cr
    &   &\!\!\!\!\!\!\!\!\!\!\!\!\!\!\!\! \quad\quad\ \    \   \, (2,1),  (3,2),  (4,5),  (5,4), (6,3), (7,2) 
\cr   &   &\!\!\!\!\!\!\!\!\!\!\!\!\!\!\!\!    \quad\quad\ \  \ \qquad\ \     \,   (3,2),  (4,6),  (5,6), (6,8), \, \ (7,6), \ \, (8,3),\
 \!  (9,2) 
\cr   &   &\!\!\!\!\!\!\!\!\!\!\!\!\!\!\! \!\!\!\!\!    \quad\quad\ \ \qquad\ \   \  \  \  \,     (3,1),  (4,6),  (5,6), (6,10), (7,12),
(8,9),(9,10),(10,6),(11,3),   
 (12,2) 
\cr   &   &\!\!\!\!\!\!\!\!\!\!\!\!    \qquad\qquad\qquad  \ldots\end{eqnarray*}
  
  At the $m$-th step, the set $J_m$ is an interval of integers   $\{a_m,\ldots, b_m\}$ with $a_m\to \infty$ slowly,  whereas
$b_m\to
\infty$  very rapidly. More precisely, let for $k=1,2, \ldots$   
$$   r_k= \max_{m\ge 1} c^m_k . $$
Then $r_1=2, r_2=3,r_3=2, r_4=6,\ldots    $ etc. And define  
\begin{equation}\label{Rz}R_k= r_1+\ldots+r_k, \qq \zeta_k=1+r_1+2r_2+\ldots+kr_k .
\end{equation}
Let $R_{k-1}<m\le R_k$. At step $m$, $J_m$ is realized by first shifting $J_{m-1}$ on the right from a length $ k$, next taking union with $J^*_{m-1}$
and in turn
$$J_m=\big\{k,k+1,\ldots, \zeta_{k-1} + (m-R_{k-1})k\big\},$$
 if $m<R_k$, whereas
 $J_{R_k}=\big\{ k+1,\ldots, \zeta_{k } \big\} $.  
\smallskip\par
Write $m=R_{k-1}+h$, $1\le h\le r_k$. Then we have the relations
%\begin{equation}  \cases{ c_{k+H}^{m} =c_{ H}^{R_{k-1} } +  (r_k-h+1) &\qq $H=0,\ldots, \zeta_{k-1} $ \cr 
%   c_{ j}^{m }=c_{ j}^{R_{k-1} }   & \qq$ k\le j<2k. $} 
%\end{equation}
\begin{equation}   c_{ j}^{m }=\begin{cases}  c_{ j-k}^{R_{k-1} } + [r_k+\ldots+(r_k-h+1)] &\qq  2k\le j \le \zeta_{k-1}  + (m-R_{k-1})k  , \cr 
   c_{ j}^{R_{k-1} }   & \qq  k\le j<2k.  
\end{cases}
\end{equation}
After the steps   
%$ R_{k-1}+h$, $1\le h\le r_k$,
 $ R_{k-1}+1, R_{k-1}+2,\ldots,R_{k }$, the function  $h\mapsto c_{n}^{R_{k-1}+h }$ will have increased from 
$$r_k+ (r_k-1)+\ldots +2+1={r_k(r_k+1)\over 2}$$
for all 
 $n\in  \big\{ 2k, \ldots, \zeta_{k-1}\big\}$. It follows that 
\begin{equation} \label{minrn}\min_{n\in   \{ 2k, \ldots, \zeta_{k-1} \}}r_{n}\ge {r_k^2\over 2}.
\end{equation}
Therefore $r_{2k}\ge {r_k^2/ 2}$. This being true for all $k$,  yields by iteration  
$$r_{2^j}\ge {1\over 2^{}} (r_{2^{j-1}})^2 \ge {1\over 2^{}}{1\over 2^{2}} (r_{2^{j-2}})^{2^2}\ge \ldots\ge {1\over
2^{1+2+\ldots+2^{H-1}}} (r_{2^{j-H}})^{2^H} =\big({r_{2^{j-H}}\over 2}\big)^{2^H}.$$
We have $r_2=3$. Thus 
\begin{equation} r_{2^j}\ge \Big({3\over 2}\Big)^{2^{j-1}},  \qq\qq j=1,2,\ldots.
\end{equation}  
We shall deduce from this and (\ref{minrn}) that $r_k$ grows at least geometrically. 
Let $n$ and let $j$ be such that $2^{j+1}\le n<2^{j+2}$. Apply (\ref{minrn}) with $k=2^j$. As $n\ge 2k$, we have  $r_{n}\ge {r_{2^j}^2/ 
 2}$ once $2^{j+2}\le \zeta_{2^j-1}  $. But
$$\zeta_{2^j-1}\ge \zeta_{2^{j-1}} =1+r_1+2r_2+\ldots+2^{j-1}r_{2^{j-1}}\ge 2^{j-1}\Big({3\over 2}\Big)^{2^{j-2}}\gg  2^{j+2}.$$
  Thereby, for $j$   large 
$$r_n\ge {1\over 2} r_{2^j}^2 \ge  {1\over 2} \Big({3\over 2}\Big)^{2^{j-1}}={1\over 2} \Big({3\over 2}\Big)^{{2^{j+2}\over 8}}\ge
{1\over 2} \Big({3\over 2}\Big)^{{n\over 8}}={1\over 2}e^{({1\over 8}\log {3\over 2})n}  .$$
 Consequently, there is a numerical constant $\rho>1$, such that for all   $n\ge 1$,  we have
\begin{equation} \label{roestim} r_{n}\ge \rho^n.
\end{equation}  
 Let $j_m:=\#\{J_m\}  $. Since $j_m  =  \zeta_{k-1}+ (m-R_{k-1}-1)k  $ if $R_{k-1}< m\le R_k$,
we have  
$$\sum_{R_{k-1}< m\le R_k} j_m= \sum_{R_{k-1}< m\le R_k}(\zeta_{k-1}+ (m-R_{k-1}-1)k) 
  =r_k\zeta_{k-1}+k\sum_{u=1}^
{r_k-1}  u   .$$
We thus notice   for later use that
\begin{equation}\label{est0}\sum_{R_{k-1}< m\le R_k} j_m=r_k\zeta_{k-1}+k 
 {r_k (r_k+1)\over 2}   .
\end{equation} 
% Let further
%$${\bf I} =\Big\{  {-1\over 2}, 0,{1\over 2}\Big\}
% .$$
% And introduce the   ${\bf g}$-dilations sequence $$ {\bf g} _{k} = T_{2^{-k}}{\bf g}, \qq\quad
%k=1,2,\ldots
%  .$$
Let $ \displaystyle{\prod^*_{ j  } f_j}$  denotes the convolution product of   $f_j$'s.  Finally we  put 
\vskip -5pt $${\bf I} =\Big\{  {-1\over 2}, 0,{1\over 2}\Big\}
  .$$ 
   Our next result generalizes   inequality (\ref{KT}) to arbitrary convolution powers of
${\bf g}$.
 \begin{proposition} \label{p5}Let $k\ge 1$ and $R_{k-1}<m\le R_k$. Then 
$${\bf g}(x) \le C_m\ \Sigma\big[\prod^*_{(j,c_j)\in {\bf U}_m}{\bf g} _{j}^{*c_j} (x) : {\bf I}_m \big] , 
$$
where ${\bf I}_m,C_m $  are defined by the recurrence relations:  ${\bf I}_0=\big\{  {-1\over 2}, 0,{1\over 2}\big\}$, $C_0=2$ and
$$  {\bf I}_m={\bf I}_{m-1}+ r_k\, {\bf I}_{m-1}, \qq\qq C_m=2^{ k(j_{m-1} -1)}C_{m-1}^2  . $$
 
\end{proposition}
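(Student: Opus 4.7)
The plan is to proceed by induction on $m$, maintaining at each step an inequality of the form ${\bf g}(x) \le C_m\,\Sigma[F_m(x) : {\bf I}_m]$ with $F_m = \prod^*_{(j,c_j)\in {\bf U}_m}{\bf g}_j^{*c_j}$. For the base case $m=1$ (starting from the configuration ${\bf U} = \{(1,2)\}$), I would start from (\ref{conv01}) and use the dilation identity $T_{1/2}({\bf g}*{\bf g}) = 2\,{\bf g}_1^{*2}$ from (\ref{conv0}) to rewrite ${\bf g}^{*2}(2x+\varepsilon) = 2\,{\bf g}_1^{*2}(x+\varepsilon/2)$ for $\varepsilon \in \{-1,0,1\}$, giving ${\bf g}(x) \le 2\,\Sigma[{\bf g}_1^{*2}(x) : {\bf I}_0]$, which matches $C_0 = 2$ and ${\bf I}_0 = \{-1/2,0,1/2\}$.

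For the inductive step, assuming the bound at $m-1$, let $j_0 = k = \min J_{m-1}$. Since $c_{j_0} \ge 1$, I would factor $F_{m-1} = {\bf g}_{j_0} * \tilde F_{m-1}$, where $\tilde F_{m-1}$ denotes $F_{m-1}$ with one copy of ${\bf g}_{j_0}$ removed---this drops the factor entirely when $c_{j_0}=1$ (corresponding to $J^* = J\setminus\{j_0\}$) and leaves ${\bf g}_{j_0}^{*(c_{j_0}-1)}$ when $c_{j_0}>1$ (case $J^* = J$). Next I would apply the inductive hypothesis to itself by evaluating ${\bf g}_{j_0}(y) = {\bf g}(2^{j_0}y)$: the dilation formula (\ref{convmany}) identifies $F_{m-1}(2^{j_0}y)$ with a dyadic power of $2$ times the ``shifted'' product $\prod^*_j {\bf g}_{j+j_0}^{*c_j}(y)$, and (\ref{Sigma1}) rescales the shift-set ${\bf I}_{m-1}$, yielding a bound of the form ${\bf g}_{j_0}(y) \le C'\,\Sigma[\prod^*_j {\bf g}_{j+j_0}^{*c_j}(y) : 2^{-j_0}{\bf I}_{m-1}]$.

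Substituting this back into $F_{m-1} = {\bf g}_{j_0} * \tilde F_{m-1}$ and passing $\tilde F_{m-1}$ through $\Sigma$ by (\ref{Sigma2}) yields the new product $\tilde F_{m-1} * \prod^*_j {\bf g}_{j+j_0}^{*c_j}$. A case-by-case verification then identifies this product with $F_m$: for each index $j$, the exponent of ${\bf g}_j$ is the sum of a contribution from $\tilde F_{m-1}$ (equal to $c_j$ when $j \in J^*\setminus\{j_0\}$, and $c_{j_0}-1$ when $j = j_0 \in J^*$) and one from the shifted product (equal to $c_{j-j_0}$ when $j-j_0 \in J_{m-1}$), and the four cases in the definition of $\mathcal{T}$ say exactly which of these two contributions are present. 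Combining with the outer inductive bound and collapsing the nested $\Sigma$'s by (\ref{Sigma3}) gives ${\bf g}(x) \le C_m\,\Sigma[F_m : {\bf I}_m]$ with the Minkowski-sum recurrence for ${\bf I}_m$ and the recurrence $C_m = C_{m-1}\cdot C'$ for the constant.

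The main obstacle will be the bookkeeping in the inductive step---in particular, matching the four-case formula defining $\mathcal{T}$ against the contributions from the two pieces of the new convolution, and tracking precisely the dilation factor arising from (\ref{convmany}) applied to $F_{m-1}$ (which depends on its total number of convolution factors, related to $j_{m-1}$ and $k$). The squaring $C_{m-1}^2$ in the recurrence for $C_m$ arises naturally from applying the inductive hypothesis twice---once as the outer bound on ${\bf g}$ and once again, at the rescaled argument $2^{j_0}y$, to bound the single factor ${\bf g}_{j_0}$ inside $F_{m-1}$.
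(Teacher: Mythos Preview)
Your proposal is correct and follows essentially the same route as the paper's own proof: the paper also starts from the rewriting of (\ref{conv01}) as ${\bf g}(x)\le C_0\,\Sigma[{\bf g}_1^{*2}(x):{\bf I}_0]$, then at each step peels off one copy of the minimal-index factor ${\bf g}_{j_0}={\bf g}(2^{j_0}\cdot)$ from the generic product, re-applies the inductive bound to that copy, uses (\ref{convmany}) and (\ref{Sigma1}) to pass the dilation through, and finally combines the two nested $\Sigma$'s via (\ref{Sigma3}) to obtain the stated recurrences for $C_m$ and ${\bf I}_m$. The paper works out the first few iterations explicitly before describing the general step, but the mechanism you outline---including the identification of the new product with $F_m$ via the four cases of $\mathcal T$ and the origin of the $C_{m-1}^2$---is precisely what is carried out there.
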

 \begin{proof} We   use repetitively the relation (see (\ref{conv0}))
$$T_{1\over 2}(h*f)  =2\, T_{1\over 2}h  *T_{1\over 2}f ,$$ $f\in L^\infty(\R),\, h\in L^1(\R)$.
   By (\ref{conv01}),  
 \begin{eqnarray} \label{conv1}
   {\bf g}(x)  &\le & 2\big\{{\bf g}_{1}^{*2}(x)  +   {\bf g}_{1}^{*2} ( x+{1\over 2})  +  {\bf g}_{1}^{*2} (
x-{1\over 2})\big\}=C_0 \Sigma\big[{\bf g}_{1}^{*2}(x) : {\bf I}  \big] \cr
&=&\Sigma\big[\prod^*_{(j,c_j)\in {\bf U} }{\bf g} _{j}^{*c_j} (x) : {\bf I}  \big]. 
  \end{eqnarray} 
 Now we apply $\mathcal E$. We begin with the "stack" of 1's of height $r_1=2$. At first 
 \begin{eqnarray*} 
 {\bf g}_{1}^{*2}(x) &= &\int_\R {\bf g}_{1}(x-y){\bf g}_{1}(y) \dd y=\int_\R {\bf g}_{1}(x-y){\bf g} (2y)
\dd y\cr 
 &\le  &   C_0 \int_\R {\bf g}_{1}(x-y)\Sigma\big[{\bf g}_{1}^{*2}(2y) : {\bf I}  \big] \dd y
  . 
   \end{eqnarray*} 
But by (\ref{Sigma1}), next (\ref{conv0})
 $$\Sigma\big[{\bf g}_{1}^{*2}(2y) : {\bf I}  \big]=\Sigma\big[ T_{1\over 2}({\bf g}_{1}^{*2})  ( y) : {1\over 2}{\bf I} \big]=\Sigma\big[
2 (T_{1\over 2}{\bf g}_{1})^{*2}   ( y) : {1\over 2}{\bf I} \big]=2\Sigma\big[
   {\bf g}_{2} ^{*2}   ( y) : {1\over 2}{\bf I} \big]. $$
Therefore\begin{eqnarray*} 
 {\bf g}_{1}^{*2}(x)  
 &\le  &   2C_0 \int_\R {\bf g}_{1}(x-y)\Sigma\big[ {\bf g}_{2 }^{*2} ( y) : {1\over 2}{\bf I}  \big] \dd y\ 
=2C_0  \Sigma\big[ {\bf g}_{1}*{\bf g}_{2 }^{*2} ( x) : {1\over 2}{\bf I}  \big] . 
   \end{eqnarray*}  By   reporting in (\ref{conv1}), we obtain 
% since ${\bf I}_1= {\bf I}  +{1\over 2}{\bf I} $,
\begin{eqnarray} \label{conv12}
   {\bf g}(x)& \le &   C_0  (2C_0  )\Sigma\Big[\Sigma\big[ {\bf g}_{1}*{\bf g}_{2 }^{*2} ( x)
 : {1\over 2}{\bf I}  \big] : {\bf I} 
\Big]  =  C_1\Sigma\big[ {\bf g}_{1}*{\bf g}_{2 }^{*2} ( x) : {\bf I}_1    \big]\cr &=&C_1\ \Sigma\big[\prod^*_{(j,c_j)\in {\bf U}_1}{\bf g} _{j}^{*c_j}
(x) : {\bf I}_1 \big]. 
  \end{eqnarray}
And $C_1=8$.
 \smallskip\par 
  We now apply $\mathcal E$ once again, and  bound the generic product ${\bf g}_{1}*{\bf g}_{2 }^{*2} ( x)$ by applying (\ref{conv12}) to
${\bf g}_{1}$. Concretely
\begin{eqnarray*} \int_\R {\bf g}_{2 }^{*2}(x-y){\bf g}_{1}(y)\dd y&=&\int_\R {\bf g}_{2 }^{*2}(x-y){\bf g} (2y)\dd y  \cr &\le& C_1
\int_\R {\bf g}_{2 }^{*2}(x-y)\Sigma\big[ {\bf g}_{1}*{\bf g}_{2 }^{*2} ( 2y) : {\bf I}_1  
\big]\dd y  \cr &=& 2^{3-1}C_1 \int_\R {\bf g}_{2 }^{*2}(x-y)\Sigma\big[ {\bf g}_{2 }*{\bf g}_{3}^{*2} (  y) : {1\over 2}{\bf I}_1   
\big]\dd y \cr &=&2^2C_1  \Sigma\big[ {\bf g}_{2 }^{*3}*{\bf g}_{3}^{*2} (  x) : {1\over 2}{\bf I}_1   \big]. 
\end{eqnarray*}   
 
By reporting in (\ref{conv12}), we obtain
% since ${\bf I}_2={\bf I}_1+ {1\over 2}{\bf I}_1$
\begin{eqnarray} \label{conv13}
   {\bf g}(x) &\le &   
 C_1( 2^ 2C_1)\Sigma\Big[ \Sigma\big[ {\bf g}_{2 }^{*3}*{\bf g}_{3}^{*2} (  x) : {1\over 2}{\bf I}_1    \big] : {\bf I}_1   
\Big]\cr  &= &C_2\Sigma\big[   {\bf g}_{2 }^{*3}*{\bf g}_{3}^{*2} (  x) :     {\bf I}_2   \big]=C_2\ \Sigma\big[\prod^*_{(j,c_j)\in {\bf U}_2}{\bf g}
_{j}^{*c_j} (x) : {\bf I}_2 \big].  
  \end{eqnarray}
And $C_2=256$. 
 For the next $\mathcal E$-iteration, as we have exhausted the stack of $1$'s, we now use
 the stack of $2$'s of height $r_2=3$.   
  We bound the new the generic product
$ {\bf g}_{2 }^{*3}*{\bf g}_{3}^{*2} (  x)
$ by applying (\ref{conv13}) to ${\bf g}_{2 }(x)$  as follows:
 \begin{eqnarray*} \label{divconv}& &\int_\R  {\bf g}_{2 }^{*2}*{\bf g}_{3}^{*2} (  x-y) {\bf g}_{2 }(y)\dd y=\int_\R {\bf g}_{2
}^{*2}*{\bf g}_{3}^{*2} (  x-y) {\bf g}(2^2y)\dd y
 \cr  &\le &C_2\int_\R {\bf g}_{2 }^{*2}*{\bf g}_{3}^{*2} (  x-y)  \Sigma\Big[   {\bf g}_{2 }^{*3}*{\bf g}_{3}^{*2} (  4y)
 :     {\bf I}_3   \Big]\dd y
\cr  &= &2^{2(3+2-1)}C_2\int_\R {\bf g}_{2 }^{*2}*{\bf g}_{3}^{*2} (  x-y) 
\Sigma\big[   {\bf g}_{4}^{*3}*{\bf g}_{5}^{*2} (  y) :     {1\over 4}{\bf I}_2  \big]\dd y
\cr &= &2^8C_2 \ \Sigma\big[  {\bf g}_{2 }^{*2}*{\bf
g}_{3}^{*2}* {\bf g}_{4}^{*3}*{\bf g}_{5}^{*2} (  x) :     {1\over 4}{\bf I}_2   \big] .\end{eqnarray*}
 By reporting in (\ref{conv13}), we obtain
\begin{eqnarray} \label{conv14}
   {\bf g}(x) &\le &   
 2^8C^2_2\ \Sigma\big[ \Sigma\big[  {\bf g}_{2 }^{*2}*{\bf g}_{3}^{*2}* {\bf g}_{4}^{*3}*{\bf g}_{5}^{*2} (  x) :    
{1\over 4}{\bf I}_2   \big] :     {\bf I}_2   \big]\cr  &= &   
 C_3\Sigma\big[    {\bf g}_{2 }^{*2}*{\bf g}_{3}^{*2}* {\bf g}_{4}^{*3}*{\bf g}_{5}^{*2} (  x)   :     {\bf I}_3   
\big], 
  \end{eqnarray}
with $C_3=16777216$. And so on.
\smallskip\par
To simplify, let $k\ge 1$ and $R_{k-1}<m\le R_k$. At step $m$, we play with the stack of $k$'s of height $r_k$ and apply  the bound
previously obtained  to the least dilation of ${\bf g}$ in the generic product  $G= \prod^*_{(j,c_j)\in {\bf U}_{m-1}}{\bf g}
_{j}^{*c_j} (x)$ from the previous step. The dilation factor being
$2^k$, the bound of ${\bf g}_k( x)$   thereby produces the new terms  $T_{2^{-k}} (G)  (  x) =\prod^*_{(j,c_j)\in {\bf U}_{m-1}}{\bf g}
_{j+k}^{*c_j} (x) $. Hence by (\ref{convmany}), after integration,  a constant factor $2^{ k(j_{m-1} -1)}C_{m-1}$.
 Next   we report  the bound obtained for the generic products in the inequality from the preceding step. This is exactly what describes
transform $\mathcal D$.  This generates a new constant factor $ C_{m-1}$. Together with the preceding constant factor,  this gives the
constant
$2^{ k(j_{m-1} -1)}C_{m-1}^2 =C_m$. The rule concerning constants $C_m$ being the same at each step inside the   block $]R_{k-1}, R_k]$,
we have the recurrence relation
\begin{equation} C_m=2^{ k(j_{m-1} -1)}C_{m-1}^2 .\end{equation}
And    the transform $c_j^{m-1}\mapsto c_j^{m }$ is  described by $\mathcal T$.
  \end{proof}
%%%%%%%%%%%%%%%%%%%%%%%%%%%%%%%%%%%%%%%%%%%%%%%%%%%%%%%%%%%%%%%%%%%%%%%%%%%%%%%%%%%%%%%%%%%%%%%%%%%%%%%%%%%%%%%%%%%
%%%%%%%%%%%%%%%%%%%%%%%%%%%%%%%%%%%%%%%%%%%%%%%%%%%%%%%%%%%%%%%%%%%%%%%%%%%%%%%%%%%%%%%%%%%%%%%%%%%%%%%%%%%%%%%%%%%
%%%%%%%%%%%%%%%%%%%%%%%%%%%%%%%%%%%%%%%%%%%%%%%%%%%%%%%%%%%%%%%%%%%%%%%%%%%%%%%%%%%%%%%%%%%%%%%%%%%%%%%%%%%%%%%%%%%
%%%%%%%%%%%%%%%%%%%%%%%%%%%%%%%%%%%%%%%%%%%%%%%%%%%%%%%%%%%%%%%%%%%%%%%%%%%%%%%%%%%%%%%%%%%%%%%%%%%%%%%%%%%%%%%%%%%
%%%%%%%%%%%%%%%%%%%%%%%%%%%%%%%%%%%%%%%%%%%%%%%%%%%%%%%%%%%%%%%%%%%%%%%%%%%%%%%%%%%%%%%%%%%%%%%%%%%%%%%%%%%%%%%%%%%
%%%%%%%%%%%%%%%%%%%%%%%%%%%%%%%%%%%%%%%%%%%%%%%%%%%%%%%%%%%%%%%%%%%%%%%%%%%%%%%%%%%%%%%%%%%%%%%%%%%%%%%%%%%%%%%%%%%
%%%%%%%%%%%%%%%%%%%%%%%%%%%%%%%%%%%%%%%%%%%%%%%%%%%%%%%%%%%%%%%%%%%%%%%%%%%%%%%%%%%%%%%%%%%%%%%%%%%%%%%%%%%%%%%%%%%
%%%%%%%%%%%%%%%%%%%%%%%%%%%%%%%%%%%%%%%%%%%%%%%%%%%%%%%%%%%%%%%%%%%%%%%%%%%%%%%%%%%%%%%%%%%%%%%%%%%%%%%%%%%%%%%%%%%
   Let $k\ge 1$. Put
\begin{equation}\label{Rz0} \gamma_{ k} =\sum_{j\in J_{R_k}} c_j^{R_{k}}, \qq d_k =\sum_{j\in J_{R_k}} jc_j^{R_{k}}. 
\end{equation}
 We shall now deduce the following estimate.
 \begin{proposition} \label{p6} Let $\nu$ be a finite     measure such that   $\widehat \nu\ge 0$. Then for any $W>0$
$${1\over 2W}\int_{ -W}^{W}   
 \widehat \nu(t)\dd t \le  C_{R_k}  \,    2^{- d_{ k} +1} \int_\R    \prod_{(j,c_j)\in {\bf
U}_{R_k}} 
\Big[{\sin ({ 2Wx\over  2^{  j}})\over  { 2Wx\over  2^{  j}} }\Big] ^{ c_j}   
 \Big|\sum_{\rho\in 2W{\bf I}_{R_k}}  e^{-i  \rho x } \Big|\nu(\dd x).
$$ 
%  Further, there is a numerical constant $  k_0$  such that for   all $k\ge k_0$ and any $T>0$,    
%\begin{equation}\int_{ -2^{\zeta_k } T}^{2^{\zeta_k } T }   
%\widehat \nu(\dd t)    \le  2^{3k\zeta_{k-1}^22^{r_k} }  T   
%\int_\R      
% \Big|{\sin  {  xT } \over
% xT } \Big|^{{1 \over 3}r_k^2\zeta_{k-1}}   
% \nu(\dd x).
% \end{equation}
%whenever      $\nu\ge 0$.
 \end{proposition}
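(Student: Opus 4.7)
The plan is to combine Proposition \ref{p5} (applied at step $m=R_k$) with the duality $\int \psi(t)\widehat{\nu}(t)\,\dd t = \int \widehat{\psi}(x)\,\nu(\dd x)$ (valid by Fubini when $\psi\in L^1$ and $\nu$ is finite). First I apply the pointwise bound of Proposition \ref{p5} to $\chi_{[-W,W]}(t)={\bf g}(t/(2W))$, i.e.\ substitute $x\mapsto t/(2W)$. The dilation identity (\ref{Sigma1}) pushes the dilation through the shift sum, rescaling ${\bf I}_{R_k}$ to $2W\,{\bf I}_{R_k}$, while (\ref{convmany}) distributes $T_{2W}$ across the convolution product. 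This produces a prefactor $(2W)^{-(\gamma_k-1)}$ (with $\gamma_k=\sum_j c_j^{R_k}$ as in (\ref{Rz0})) and replaces every ${\bf g}_j$ by $T_{2W}{\bf g}_j=T_{2W/2^j}{\bf g}$, which is a centered rectangle of half-width $W/2^j$.

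Next, integrate both sides against $\widehat{\nu}(t)\ge 0$ and apply the duality, which transfers the bound to an integral against $\nu$. By (\ref{Sigma11}), the Fourier transform of the shifted sum factors as the Fourier transform of the convolution product times the character sum $\sum_{\rho\in 2W\,{\bf I}_{R_k}}e^{-i\rho x}$. Multiplicativity of the Fourier transform on convolutions then factors the first piece as $\prod_{(j,c_j)\in{\bf U}_{R_k}}[\widehat{T_{2W}{\bf g}_j}(x)]^{c_j}$, each factor being a scaled sinc that produces precisely the sine terms appearing on the right-hand side of the statement.

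The constants are then collected: the amplitudes of the sinc factors contribute $\prod_j(2W/2^j)^{c_j^{R_k}}=(2W)^{\gamma_k}\,2^{-d_k}$, where $d_k=\sum_j j c_j^{R_k}$; combined with the $(2W)^{-(\gamma_k-1)}$ prefactor from (\ref{convmany}) and the normalization $1/(2W)$ on the left, one obtains the factor $2^{-d_k+1}$. Finally, since the left-hand side is real and non-negative, one may majorize the right-hand side (a complex-valued integral) by taking the modulus of the character sum $\sum_\rho e^{-i\rho x}$, thereby producing the stated bound with overall multiplier $C_{R_k}\cdot 2^{-d_k+1}$. The principal obstacle is the bookkeeping of the many $2$- and $W$-power factors propagated through (\ref{conv0}), (\ref{convmany}), and the explicit Fourier transforms of the rectangle functions; the underlying Fourier/Fubini exchange is then routine.
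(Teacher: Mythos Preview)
Your proposal is correct and follows essentially the same route as the paper's own proof: apply Proposition~\ref{p5} at $m=R_k$, dilate via (\ref{Sigma1}) and (\ref{convmany}) to bound $\chi_{[-W,W]}$, pass to the Fourier side by the Parseval/Fubini duality, factor using (\ref{Sigma11}) and the multiplicativity of $\widehat{\ \cdot\ }$ on convolutions, and then collect the $(2W)$- and $2^{-j}$-powers into $2^{-d_k+1}$. The only cosmetic difference is that you make explicit the final passage to $\bigl|\sum_\rho e^{-i\rho x}\bigr|$, which the paper leaves implicit; note that this step is legitimate because the sets ${\bf I}_m$ (built from the symmetric set ${\bf I}$ by the recurrence in Proposition~\ref{p5}) are symmetric about $0$, so the exponential sum is already real.
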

   \begin{proof} Recall that  $J_{R_k}=\big\{ k+1,\ldots, \zeta_{k } \big\} $.  Further, by (\ref{Rz})
\begin{equation}\label{Rz1} \gamma_{ k} \ge {r_k^2\over 2}(\zeta_{k-1}-2k)={r_k^2\over 2}
\zeta_{k-1}(1-{2k\over\zeta_{k-1}})\ge  ({1-\e \over 2})r_k^2\zeta_{k-1}
  ,  
\end{equation}
once $k\ge k_\e$. 
Similarly 
\begin{equation}\label{Rz2} d_k \ge {r_k^2\over 2}\sum_{j\in   \{ 2k, \ldots, \zeta_{k-1}
\}} j\ge {r_k^2\over 4}(\zeta_{k-1}^2-4k^2)
%={r_k^2\zeta_{k-1}^2\over 4} (1-\Big({2k\over\zeta_{k-1}}\Big)^2)
\ge ({1-\e \over 4} )r_k^2\zeta_{k-1}^2  
\end{equation}
for $k$ large enough.  
 
  By Proposition \ref{p5}, with $ m= R_k$
 $${\bf g}(t) \le C_{R_k}\ \Sigma\big[\prod^*_{(j,c_j)\in {\bf U}_{R_k}}{\bf g} _{j}^{*c_j} (t) : {\bf I}_{R_k} \big] . 
$$
Let 
%$b={1\over 2W}>0$
$ W>0$.    Then by (\ref{Sigma1}), next   (\ref{convmany})
\begin{eqnarray} 
 \chi_{[-W,W]}(t)
&=&   g( {t\over 2W} )  
 \le C_{R_k}\ \Sigma\big[\prod^*_{(j,c_j)\in {\bf U}_{R_k}}{\bf g} _{j}^{*c_j} ({t\over 2W}) : {\bf I}_{R_k} \big]
\cr & =& C_{R_k}\ \Sigma\big[T_{2W}\Big(\prod^*_{(j,c_j)\in {\bf U}_{R_k}}{\bf g} _{j}^{*c_j}\Big) ( t) : 2W{\bf I}_{R_k}
\big]
 \cr & =&     { C_{R_k}\over (2W)^{\gamma_{ k}-1}}  \ \Sigma\big[ \prod^*_{(j,c_j)\in {\bf U}_{R_k}}     (T_{2W}{\bf g}
_{j})^{*c_j}    ( t) : 2W{\bf I}_{R_k}
\big].
\end{eqnarray}
   
 By (\ref{Sigma11})
\begin{equation} \widehat{\Sigma }\big[ \prod^*_{(j,c_j)\in {\bf U}_{R_k}}     (T_{2W}{\bf g} _{j})^{*c_j}     : 2W{\bf
I}_{R_k}
\big](x)
 = \prod_{(j,c_j)\in {\bf U}_{R_k}}\widehat {  T_{2W}{\bf g} _{j}}   
 (x)^{ c_j}\ \Big\{\sum_{\rho\in {2W}{\bf I}_{R_k}}  e^{-i  \rho x } \Big\} . 
\end{equation} 
But 
$$ \widehat {  T_{2W}{\bf g} _{j}}(x)= \int_\R e^{ixu}{\bf g}({{2^{  j}u\over 2W}         }   )\dd u
={2W\over  2^{  j}} \int_\R e^{ {i2W \over  2^{  j}}x v}{\bf g}(v  )\dd  v
={2W\over  2^{  j}}  \, \widehat {  {\bf g} }({ 2Wx\over  2^{  j}}).$$
  Hence 
\begin{eqnarray}& & \widehat{\Sigma }\big[ \prod^*_{(j,c_j)\in {\bf U}_{R_k}}     (T_{2W}{\bf g} _{j})^{*c_j}     : 2W{\bf
I}_{R_k}
\big](x)\cr 
 &=& (2W)^{ \gamma_{ k}} 2^{- d_{k}} \prod_{(j,c_j)\in {\bf U}_{R_k}}  \widehat {  {\bf g} }\big({ 2Wx\over  2^{  j}}\big)^{ c_j}\
\Big\{\sum_{\rho\in 2W{\bf I}_{R_k}}  e^{-i  \rho x } \Big\} . 
\end{eqnarray}

 And by the Parseval relation
 \begin{eqnarray} & &{1\over 2W}\int_{ -W}^{W}   
 \widehat \nu(t)\dd t \le        {C_{R_k}\over (2W)^{\gamma_{ k} }} \   \int_{\R}      \Sigma\big[
\prod^*_{(j,c_j)\in {\bf U}_{R_k}}     (T_{2W}{\bf g} _{j})^{*c_j}    ( t) : 2W{\bf I}_{R_k}
\big]
\widehat \nu(t)\dd t  \,  
 \cr &= &{C_{R_k}\over (2W)^{\gamma_{ k} }}\,  (2W)^{ \gamma_{ k}} 2^{- d_{k}}  \int_\R    \prod_{(j,c_j)\in
{\bf U}_{R_k}} 
\widehat {  {\bf g} }({ 2Wx\over  2^{  j}}) ^{ c_j}   
\Big\{\sum_{\rho\in 2W{\bf I}_{R_k}}  e^{-i  \rho x } \Big\}\nu(\dd x)
\cr &= &C_{R_k}  \,    2^{- d_{ k} +1} \int_\R    \prod_{(j,c_j)\in {\bf
U}_{R_k}} 
\Big({\sin ({ 2Wx\over  2^{  j}})\over  { 2Wx\over  2^{  j}} }\Big) ^{ c_j}   
 \Big\{\sum_{\rho\in 2W{\bf I}_{R_k}}  e^{-i  \rho x } \Big\}\nu(\dd x).
\cr &  &  \end{eqnarray}
\end{proof} 
 %%%%%%%%%%%%%%%%%%%%%%%%%%%%%%%%%%%%%%%%%%%%%%%%%%%%%%%%%%%%%%%%%%%%%%%%%%%%%%%%%%%%%%%%%%%%%%%%%%%%%%%%%%%%%%%%%%%
%%%%%%%%%%%%%%%%%%%%%%%%%%%%%%%%%%%%%%%%%%%%%%%%%%%%%%%%%%%%%%%%%%%%%%%%%%%%%%%%%%%%%%%%%%%%%%%%%%%%%%%%%%%%%%%%%%%
%%%%%%%%%%%%%%%%%%%%%%%%%%%%%%%%%%%%%%%%%%%%%%%%%%%%%%%%%%%%%%%%%%%%%%%%%%%%%%%%%%%%%%%%%%%%%%%%%%%%%%%%%%%%%%%%%%%
%%%%%%%%%%%%%%%%%%%%%%%%%%%%%%%%%%%%%%%%%%%%%%%%%%%%%%%%%%%%%%%%%%%%%%%%%%%%%%%%%%%%%%%%%%%%%%%%%%%%%%%%%%%%%%%%%%%
%%%%%%%%%%%%%%%%%%%%%%%%%%%%%%%%%%%%%%%%%%%%%%%%%%%%%%%%%%%%%%%%%%%%%%%%%%%%%%%%%%%%%%%%%%%%%%%%%%%%%%%%%%%%%%%%%%%
\section{Proof of Theorem \ref{cc}}  By assumption
$\nu\ge 0$.   Choose 
  $    W = 2^{\zeta_k }T $. Then
$${\sin ({ 2Wx\over  2^{  j}})\over  {2W x\over  2^{  j}} }= {\sin ({  2^{\zeta_k+1-j}xT })\over 2^{\zeta_k+1-j}xT }$$
But we have 
 that 
\begin{equation} \big|\sin \sum_{k=1}^n x_k\big|\le \sum_{k=1}^n \sin x_k,
\end{equation}
if $0<x_k<\pi$ and $n>1$, see \cite{M} p.236. From this easily follows that $|\sin nx|\le n|\sin x|$ for any real $x$ and any integer
$n$. Indeed, write $x=x'+k\pi$ with $0<x'<\pi$. Then $|\sin n x|=|\sin (nx' +nk\p)|=|\sin  nx'  |\le
n|\sin  nx'  |= n|\sin  nx   |
$.

 Consequently
\begin{equation}     \Big|{\sin  {  2^{\zeta_k+1-j}xT } \over 2^{\zeta_k+1-j}xT } \Big|\le \Big|{\sin  {  xT } \over
 xT } \Big|.
\end{equation}

  By reporting and since $ \#\{{\bf I}_{R_k} \} = 3^{R_k}$ we get
\begin{equation}{1\over 2.2^{\zeta_k } T}\int_{ -2^{\zeta_k } T}^{2^{\zeta_k } T }   
\widehat \nu(\dd t)    \le   C_{R_k}    3^{R_k}    2^{- d_{ k} }   
\int_\R      
 \Big|{\sin  {  xT } \over
 xT } \Big|^{ \gamma_{ k}}   
 \nu(\dd x).
 \end{equation}
   
And by using estimates (\ref{Rz1}), (\ref{Rz2})
\begin{equation}{1\over 2.2^{\zeta_k } T}\int_{ -2^{\zeta_k } T}^{2^{\zeta_k } T }   
\widehat \nu(\dd t)    \le   C_{R_k}   3^{R_k}    2^{ - {1 \over 5}r_k^2\zeta_{k-1}^2}    
\int_\R      
 \Big|{\sin  {  xT } \over
 xT } \Big|^{{1 \over 3}r_k^2\zeta_{k-1}}   
 \nu(\dd x).
 \end{equation} 
We now   estimate $C_{R_k}$. 
 By iterating inside the block of integers $]R_{k-1}, R_k]$ the recurrence relation   $C_m=2^{ k(j_{m-1} -1)}C_{m-1}^2$ obtained in
Proposition \ref{p5}, we obtain 
$$C_{R_{k }}=2^{k\{(j_{R_{k-1}}-1)+\ldots+(j_{R_{k }}-1)\}} C_{R_{k-1}}^{2^{r_k}}. $$ 
According to (\ref{est0}), we have 
 \begin{eqnarray*}k\{(j_{R_{k-1}}-1)+\ldots+(j_{R_{k }}-1)\}&=&k \sum_{R_{k-1}< m\le R_k} (j_m-1)\cr&=&kr_k(\zeta_{k-1}-1)+k^2 
 {r_k (r_k+1)\over 2}.   
 \end{eqnarray*}
As $\zeta_k=1+r_1+2r_2+\ldots+kr_k$, it follows that 
 $$kr_k(\zeta_{k-1}-1)+k^2 
 {r_k (r_k+1)\over 2}\le kr_k \zeta_{k-1} +k^2 
  r_k^2 \le  \zeta_k \zeta_{k-1} +\zeta_k^2\le 2\zeta_k^2.  $$

Thus
\begin{equation}\label{recrk} C_{R_{k }} \le 2^{2\zeta_k^2}\ C_{R_{k-1}}^{2^{r_k}}. 
\end{equation} 
 By successively iterating this,  and since $C_{R_{1}}=2$,  we get
%\begin{equation} C_{R_{k }}\le 2^{2\{\zeta_k^2+\zeta_{k-1}^22^{r_k}+ \zeta_{k-2}^2(2^{r_k}+2^{r_{k-1}})+\ldots)\}}C_{R_{k-3
%}}^{2^{r_k}+2^{r_{k-1}}+2^{r_{k-2}}}.
%\end{equation}
\begin{eqnarray*} C_{R_{k }}&\le& 2^{2\{\zeta_k^2+\zeta_{k-1}^22^{r_k}+
\zeta_{k-2}^2(2^{r_k}+2^{r_{k-1}})+\ldots
+\zeta_{2}^2(2^{r_k}+\ldots +2^{r_{3}}))\}}2^{2^{r_k}+2^{r_{k-1}}+ \ldots+2^{r_{ 2}}}
\cr 
&\le& 2^{2 .2^{r_k} k\zeta_k^2 } .
\end{eqnarray*}
But   $ r_{k}\ge \rho^k$ by (\ref{roestim}), so that
 $$R_k\le \zeta_k=1+r_1+2r_2+\ldots+kr_k \ll_\e 2^{\e r_k}.$$Hence also
\begin{equation} C_{R_{k }}\le 2^{ 2^{(1+\e)r_k} }.
\end{equation}

Finally, 
\begin{eqnarray}{1\over   T}\int_{ -2^{\zeta_k } T}^{2^{\zeta_k } T }   
\widehat \nu(\dd t)   & \ll_\e &  2^{ 2^{(1+\e)r_k} }2^{ \zeta_k  - {1 \over 5}r_k^2\zeta_{k-1}^2   }       
\int_\R      
 \Big|{\sin  {  xT } \over
 xT } \Big|^{{1 \over 3}r_k^2\zeta_{k-1}}   
 \nu(\dd x)
\cr 
 & \ll_\e &  2^{ 2^{(1+\e)r_k} }        
\int_\R      
 \Big|{\sin  {  xT } \over
 xT } \Big|^{ r_k^2 }   
 \nu(\dd x).
 \end{eqnarray}
%But \begin{equation}  3k\zeta_{k-1}^22^{r_k} - {1 \over 5}r_k^2\zeta_{k-1}^2 + R_k\log {3\over 2}\ll_\e 2^{(1+\e)r_k} 
%.
% \end{equation}
Thereby, since $\zeta_k\ge r_k$
\begin{equation} \int_{ -2^{r_k } T}^{2^{r_k } T }   
\widehat \nu(\dd t)    \ll_\e  T\,2^{2^{(1+\e)r_k}}       
\int_\R      
 \Big|{\sin  {  xT } \over
 xT } \Big|^{ r_k^2 }   
 \nu(\dd x).
 \end{equation}
 %%%%%%%%%%%%%%%%%%%%%%%%%%%%%%%%%%%%%%%%%%%%%%%%%%%%%%%%%%%%%%%%%%%%%%%%%%%%%%%%%%%%%%%%%%%%%%%%%%%%%%%%%%%%%%%%%%%
%%%%%%%%%%%%%%%%%%%%%%%%%%%%%%%%%%%%%%%%%%%%%%%%%%%%%%%%%%%%%%%%%%%%%%%%%%%%%%%%%%%%%%%%%%%%%%%%%%%%%%%%%%%%%%%%%%%
%%%%%%%%%%%%%%%%%%%%%%%%%%%%%%%%%%%%%%%%%%%%%%%%%%%%%%%%%%%%%%%%%%%%%%%%%%%%%%%%%%%%%%%%%%%%%%%%%%%%%%%%%%%%%%%%%%%
%%%%%%%%%%%%%%%%%%%%%%%%%%%%%%%%%%%%%%%%%%%%%%%%%%%%%%%%%%%%%%%%%%%%%%%%%%%%%%%%%%%%%%%%%%%%%%%%%%%%%%%%%%%%%%%%%%%
%%%%%%%%%%%%%%%%%%%%%%%%%%%%%%%%%%%%%%%%%%%%%%%%%%%%%%%%%%%%%%%%%%%%%%%%%%%%%%%%%%%%%%%%%%%%%%%%%%%%%%%%%%%%%%%%%%%

{\baselineskip 10pt
 } \noi
 
  \end{document}